\documentclass[11pt]{amsart}

\usepackage[T1]{fontenc}
\usepackage{lmodern}
\usepackage{microtype}
\usepackage{mathtools,amssymb,amsthm}
\usepackage[margin=1in]{geometry}
\usepackage[hidelinks]{hyperref}

\numberwithin{equation}{section}
\newtheorem{theorem}{Theorem}[section]
\newtheorem{proposition}[theorem]{Proposition}
\newtheorem{lemma}[theorem]{Lemma}
\newtheorem{corollary}[theorem]{Corollary}
\theoremstyle{remark}
\newtheorem{remark}[theorem]{Remark}

\newcommand{\cB}{\mathcal B}
\newcommand{\Sub}{\operatorname{Sub}}
\newcommand{\gen}[1]{\left\langle #1\right\rangle}
\newcommand{\F}{\mathbb F}
\newcommand{\Z}{\mathbb Z}
\newcommand{\Cyc}[1]{(\Z/#1\Z)}
\newcommand{\qbinom}[2]{\genfrac{[}{]}{0pt}{}{#1}{#2}_{p}}

\title[Sharp bounds for subgroup intervals]
  {Sharp bounds for intervals in finite subgroup lattices}
\author{Sebastien Palcoux}
\address{S.~Palcoux, Beijing Institute of Mathematical Sciences and
Applications, Huairou District, Beijing 101408, China}
\email{sebastienpalcoux@gmail.com}
\author{Pablo Spiga}
\address{P.~Spiga, Dipartimento di Matematica e Applicazioni,
Universit\`a degli Studi di Milano-Bicocca, Milano, Italy}
\email{pablo.spiga@unimib.it}
\date{September 9, 2026}

\subjclass[2020]{Primary 20D30; Secondary 20E15, 05A16}
\keywords{finite group, subgroup lattice, intermediate subgroup, random generation,
Gaussian binomial coefficient, theta function}

\begin{document}

\begin{abstract}
Let $H\leq G$ be finite and put $n=[G:H]>1$.  If $p$ is the least prime
divisor of $n$, we prove
\[
 |\{K:H\leq K\leq G\}|<c(p)n^{\frac14\log_p n},
\]
where
\[
 c(p)=\prod_{j\geq1}(1-p^{-j})^{-1}\sum_{z\in\Z}p^{-z^2}.
\]
The exponent and the constant are simultaneously optimal, already for
elementary abelian $p$-groups of even rank.  In particular, for arbitrary
$n>1$ we obtain the sharp universal estimate
\[
 |\{K:H\leq K\leq G\}|<7.371968802\,n^{\frac14\log_2 n}.
\]
This removes the polynomial factor from the best previously known relative
bound and extends the sharp absolute subgroup-counting theorem to arbitrary
subgroup intervals.  The group-theoretic argument is elementary.  Its main ingredient is a
weighted packing inequality for relative generating tuples, obtained from an
escape tree on the coset space $G/H$.
\end{abstract}

\maketitle
\section{Introduction}

For an inclusion $H\leq G$ of finite groups, write
\[
 \Sub(G,H)=\{K:H\leq K\leq G\},
 \qquad n=[G:H].
\]
We count the closed interval, so both $H$ and $G$ are included.

The usual generating-set argument (see the opening paragraph of
\cite{Spiga}) adapts to this setting as follows.  Starting with $H$, adjoin
an element of $K$ outside the subgroup already generated until reaching
$K$.  Every step at least doubles the index over $H$, so at most
$d=\lfloor\log_2n\rfloor$ steps are needed.  Pad with the identity coset to
obtain a $d$-tuple in $(G/H)^d$.  Each tuple determines at most one subgroup,
and every intermediate subgroup arises in this way.  Consequently,
\[
 |\Sub(G,H)|\leq n^d\leq n^{\log_2n}.
\]
Here and below, $G/H$ denotes the set of left cosets; $H$ need not be normal.

Borovik, Pyber and Shalev~\cite{BPS} proved the asymptotic estimate
\[
 |\Sub(G)|\leq |G|^{(1/4+o(1))\log_2|G|}.
\]
Spiga~\cite{Spiga} obtained an explicit elementary bound with a polynomial
loss.  Fusari and Spiga~\cite{FS} subsequently removed that loss and proved
\[
 |\Sub(G)|<c(2)|G|^{\frac14\log_2|G|},
\]
with the optimal constant $c(2)=7.371968801\ldots$.

The relative problem was treated recently by Guerra, Mastrogiacomo and
Spiga~\cite{GMS}.  They proved
\[
 |\Sub(G,H)|\leq 7.3722\,
 n^{\frac14\log_2n+1.8919}
\]
and explicitly suggested that the additive constant $1.8919$ in the exponent
should be removable.  Our first theorem confirms this in optimal form and
retains the finer dependence on the least prime divisor of the index.

This removes the polynomial loss, but it does not establish a bound of the
form $n^{\frac14\Omega(n)+c}$ with an absolute constant $c$, where
$\Omega(n)$ counts prime factors with multiplicity.  Such a strengthening
is suggested by the discussion surrounding \cite[Conjecture~1.3]{GMS}.

For a prime $p$ and an integer $r\geq0$, set
\[
 \gamma_{p,0}=1,
 \qquad
 \gamma_{p,r}=\prod_{j=1}^{r}(1-p^{-j})\quad(r\geq1),
\]
and define
\[
 C_p=\prod_{j=1}^{\infty}(1-p^{-j})^{-1},
 \qquad
 \Theta_p=\sum_{z\in\Z}p^{-z^2},
 \qquad
 c(p)=C_p\Theta_p.
\]

\begin{theorem}\label{thm:main}
Let $H\leq G$ be finite, let $n=[G:H]>1$, and let $p$ be the least prime
divisor of $n$.  Put $L=\log_p n$.  Then
\begin{equation}\label{eq:finite-main}
 |\Sub(G,H)|
 \leq
 U_p(n):=
 \sum_{r=0}^{\lfloor L\rfloor}
 \frac{p^{r(L-r)}}{\gamma_{p,r}}
 <c(p)n^{\frac14\log_p n}.
\end{equation}
Both the exponent $1/4$ and the constant $c(p)$ are optimal among inclusions
whose index has least prime divisor $p$.  More precisely, for positive integers $a$, put
\[
 H_a=1,
 \qquad
 G_a=\Cyc{p}^{\,2a}.
\]
Then
\begin{equation}\label{eq:sharp-intro}
 \frac{|\Sub(G_a,H_a)|}
 {[G_a:H_a]^{\frac14\log_p[G_a:H_a]}}
 \longrightarrow c(p)\qquad(a\longrightarrow\infty).
\end{equation}
\end{theorem}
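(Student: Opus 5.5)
The plan has three steps: a combinatorial inequality $|\Sub(G,H)|\le U_p(n)$, an analytic inequality $U_p(n)<c(p)n^{L/4}$, and a computation for elementary abelian groups that gives sharpness.

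\emph{Combinatorial step.} For $K\in\Sub(G,H)$, let $r(K)$ be the least number of elements $g_1,\dots,g_r$ with $K=\gen{H,g_1,\dots,g_r}$. In a shortest such sequence each $g_i$ escapes $K_{i-1}=\gen{H,g_1,\dots,g_{i-1}}$. Every prime dividing $[K_i:K_{i-1}]$ divides $n$, so each step multiplies the index over $H$ by at least $p$. Hence
\[
[K:H]\ge p^{r(K)} \quad\text{and}\quad r(K)\le\lfloor L\rfloor .
\]
It therefore suffices to prove, for each fixed $r$, the weighted packing inequality
\[
\sum_{K:\,r(K)=r}\gamma_{p,r}\,[K:H]^{r}\;\le\; n^{r}.
\]
Combined with $[K:H]^r\ge p^{r^2}$, this gives $|\{K:r(K)=r\}|\le p^{r(L-r)}/\gamma_{p,r}$, and summing over $r$ yields $U_p(n)$.

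To prove the packing inequality, I would attach to each such $K$ a set $T_K\subseteq (G/H)^r$ with two properties:
\begin{itemize}
\item the sets $T_K$ are pairwise disjoint, because each tuple determines $\gen{H,g_1,\dots,g_r}$;
\item $|T_K|\ge\gamma_{p,r}[K:H]^r$.
\end{itemize}
The natural choice is the tuples of cosets in $K/H$ that generate $K$ over $H$. I would organise them in an escape tree on $G/H$: level $i$ records which subgroup $\gen{H,g_1,\dots,g_i}$ has been reached. The main obstacle is the lower bound $|T_K|\ge\gamma_{p,r}[K:H]^r$. The naive estimate multiplies the escape probabilities $1-[K_{i-1}:H]/[K:H]$, and this is too weak when intermediate subgroups are large.

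My first attempt would be to count backwards through the tree. Condition on the subgroup $M=\gen{H,g_1,\dots,g_{r-1}}$ reached before the last step, and use minimality of $r$ to show that $M$ must itself require $r-1$ generators. An induction on $r$ should then yield the factor $(1-p^{-r})$ at the last step, since at most a $p^{-r}$-proportion of the weight can fail to escape. If pure counting does not close, I would weaken the target to a weighted version, in which each tuple contributes with weight $[K:H]^{-r}$ only once along its branch, and prove that the weights at each level of the tree sum to at most $1$.

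\emph{Analytic step.} Complete the square:
\[
p^{r(L-r)}=n^{L/4}\,p^{-(r-L/2)^2}.
\]
Use $1/\gamma_{p,r}<C_p$ for every $r$ (strict for $r\ge1$, and the case $r=0$ is harmless since there are other terms). Then
\[
U_p(n)<C_p\,n^{L/4}\sum_{z\in\Z}p^{-(z-L/2)^2}.
\]
The shifted theta sum is at most $\Theta_p$. This follows from Poisson summation: $\sum_z p^{-(z+t)^2}$ is a cosine series in $t$ with positive coefficients, so it is maximised at $t=0$.

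\emph{Sharpness.} For $G_a=\Cyc{p}^{2a}$ we have $|\Sub(G_a,1)|=\sum_r\qbinom{2a}{r}$, and
\[
\qbinom{2a}{r}=p^{r(2a-r)}\,\frac{\gamma_{p,2a}}{\gamma_{p,r}\gamma_{p,2a-r}}.
\]
Dividing by $p^{a^2}$ and substituting $r=a+z$ gives terms $p^{-z^2}\gamma_{p,2a}/(\gamma_{p,a+z}\gamma_{p,a-z})$. These are dominated by $C_p^2p^{-z^2}$. Each term converges to $p^{-z^2}C_p^{-1}C_p^{2}=C_pp^{-z^2}$ as $a\to\infty$ with $z$ fixed. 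Dominated convergence then gives \eqref{eq:sharp-intro}, and hence the optimality of both $1/4$ and $c(p)$.
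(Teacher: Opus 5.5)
Your analytic step and your sharpness computation are correct and match the paper: completing the square, Poisson summation for the shifted theta sum, and dominated convergence for the central Gaussian binomials.

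\textbf{The gap is in the combinatorial step.} Indexing subgroups by the minimal relative generator number $r(K)$ makes the packing inequality you set out to prove false. Take $G=\Cyc{15}$ and $H=1$, so $n=15$ and $p=3$. The subgroups with $r(K)=1$ have orders $3$, $5$, $15$, and
\[
 \gamma_{3,1}(3+5+15)=\tfrac{46}{3}>15=n .
\]
The per-subgroup bound already fails for $K=G$: it has only $\varphi(15)=8$ generators, while $\gamma_{3,1}\cdot 15=10$. More generally, for cyclic $K$ of squarefree order with many prime factors $\geq p$, the ratio $\varphi(m)/m$ is arbitrarily small. So no positive lower bound for the proportion of generating $r(K)$-tuples depending only on $p$ and $r(K)$ exists. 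The structural reason is that with tuples of length $r(K)$, an escaping path can stall in a proper subgroup (here, at an element of order $3$ or $5$). The leaves of your escape tree therefore need not generate $K$. No backwards induction on minimal generation can rescue an inequality that is false.

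\textbf{The paper's fix.} Use the bands $\cB_r^{(p)}(G,H)$, defined by $p^r\le[K:H]<p^{r+1}$, and tuples of length $r=\lfloor\log_p[K:H]\rfloor$. This $r$ is at least $r(K)$ and can be strictly larger; for $\Cyc{15}$ with $p=3$ it is $2$. Every proper $M$ with $H\le M<K$ then has $[M:H]\le[K:H]/p<p^r$. Hence $r$ strict escapes force arrival at $K$, and every leaf of the escape tree generates $K$. (In that tree, a proper node's children are the cosets outside the current subgroup; once $K$ is reached, the children are all $m=[K:H]$ cosets.)

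On a path first reaching $K$ after $d\le r$ strict moves, the normalized branching degrees are $1-1/t_i$ with $t_i=[K:K_i]$. The least-prime index bound gives $t_{d-j}\ge p^j$. So every root-to-leaf product is at least $\gamma_{p,d}m^r\ge\gamma_{p,r}m^r$, and the leaf-counting lemma for rooted trees turns this into at least $\gamma_{p,r}m^r$ generating $r$-tuples.

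Thus the ``naive'' product of escape probabilities you dismissed is exactly the right estimate. Intermediate subgroups are large only near the end of a path, where $t_i$ is small, and the geometric decay of the $t_i$ is what produces $\gamma_{p,r}$. With this choice of $r$, your disjointness argument in $(G/H)^r$ and the bound $[K:H]^r\ge p^{r^2}$ finish the proof as you planned. The bands for $0\le r\le\lfloor\log_p n\rfloor$ partition $\Sub(G,H)$.
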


Taking $p=2$ in the proof, whether or not $2$ divides $n$, gives the following
universal form.

\begin{corollary}\label{cor:universal}
For every inclusion $H\leq G$ of finite groups with $n=[G:H]$,
\[
 |\Sub(G,H)|<c(2)n^{\frac14\log_2n}
 \quad\text{if }n>1,
\]
where
\[
 7.371968801461<c(2)=
 \left(\prod_{j\geq1}(1-2^{-j})^{-1}\right)
 \left(\sum_{z\in\Z}2^{-z^2}\right)
 <7.371968801462.
\]
No smaller universal constant is possible.
\end{corollary}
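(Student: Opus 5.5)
Corollary~\ref{cor:universal} follows from the proof of Theorem~\ref{thm:main} run with $p=2$, together with an analytic comparison and a numerical evaluation. The plan is to check where that proof actually uses that $p$ is the least prime divisor of $n$. It should enter only through the fact that every proper step $K<K'$ between intermediate subgroups has $[K':K]\geq p$, since that index divides $n$. For $p=2$ this holds unconditionally, because any proper inclusion has index at least $2$. So the weighted packing inequality for relative generating tuples from the escape tree on $G/H$ should go through verbatim with $p=2$. This gives
\[
|\Sub(G,H)|\leq U_2(n)=\sum_{r=0}^{\lfloor \log_2 n\rfloor}\frac{2^{r(\log_2 n-r)}}{\gamma_{2,r}}
\]
for every $n>1$, whatever the parity of $n$. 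The one thing to verify carefully is that no step of that argument uses divisibility of $n$ by $p$ (for instance an identity such as $n=p^L$ with integral $L$). If one does, I would replace it by the inequality $[K':K]\geq 2$ and the real number $L=\log_2 n$.

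Next comes the strict inequality $U_2(n)<c(2)n^{\frac14\log_2 n}$, which is the second inequality in \eqref{eq:finite-main} specialised to $p=2$. Writing $L=\log_2 n$, we have $r(L-r)=L^2/4-(r-L/2)^2$. Each term is therefore at most $C_2\,n^{L/4}\,2^{-(r-L/2)^2}$, using $1/\gamma_{2,r}\le C_2$. It remains to show $\sum_{r}2^{-(r-L/2)^2}\le\Theta_2$ with strict inequality overall. Since $r$ only runs over $0\le r\le\lfloor L\rfloor$, the sum is over a finite, nonempty-complement subset of a shifted lattice. The full shifted theta sum $\sum_{z\in\Z}2^{-(z-t)^2}$ attains its maximum $\Theta_2$ at $t\in\Z$; this follows from the Jacobi triple product, or from the Poisson expansion $\sqrt{\pi/\ln 2}\sum_k e^{-\pi^2k^2/\ln 2}\cos(2\pi kt)$, whose nonconstant terms are dominated by the $k=1$ term. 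Dropping the positive terms with $r<0$ or $r>L$ makes the inequality strict. This step is already part of Theorem~\ref{thm:main}, so I would cite it.

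Optimality comes from \eqref{eq:sharp-intro} with $p=2$: the groups $G_a=\Cyc{2}^{2a}$ with $H_a=1$ have ratio tending to $c(2)$. Hence no constant smaller than $c(2)$ works for all inclusions.

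Finally, the numerical bounds on $c(2)$. I would truncate the product $\prod_{j\le J}(1-2^{-j})^{-1}$ at, say, $J=45$. The tail factor lies between $1$ and $\exp(2\cdot 2^{-J})$, because $-\log(1-x)\le 2x$ for $x\le 1/2$. Similarly, truncate $\Theta_2=1+2\sum_{z\ge1}2^{-z^2}$ at $z=7$, with tail below $2^{-63}$. Evaluating these with rigorous interval arithmetic gives $7.371968801461<c(2)<7.371968801462$. The main obstacle is the first step: confirming that the escape-tree packing argument needs only the index lower bound $2$ and not $2\mid n$. The rest is routine.
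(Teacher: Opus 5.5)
Your proposal is correct and matches the paper's own argument. The least prime divisor enters only through Lemma~\ref{lem:index-growth}, whose conclusion $[B:A]\geq 2$ holds for every strict inclusion, so the escape-tree packing, the band sum $U_2(n)$ and the theta-sum estimate go through unchanged for every $n>1$ with $L=\log_2 n$ real, and optimality comes from $H_a=1$, $G_a=\Cyc{2}^{\,2a}$. The remaining differences are cosmetic: you get strictness by discarding the positive terms with $r\notin[0,L]$ rather than from $\gamma_{2,r}^{-1}<C_2$, and you sketch a rigorous check of the decimal bounds on $c(2)$, which the paper only states. Truncating at $J=45$ does suffice, since $c(2)\approx 7.3719688014613$ leaves about $10^{-13}$ of slack at the lower end.
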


For $H=1$, this is the theorem of Fusari and Spiga~\cite{FS}.
The constant and its optimality are already present in their result;
the assertion here extends the same bound to arbitrary subgroup intervals.

For every integer $r\geq0$, let
\[
 \cB_r^{(p)}(G,H)=
 \{K\in\Sub(G,H):p^r\leq[K:H]<p^{r+1}\}.
\]

\begin{theorem}\label{thm:moment}
Under the hypotheses of Theorem~\ref{thm:main}, one has
\begin{equation}\label{eq:moment}
 \gamma_{p,r}
 \sum_{K\in\cB_r^{(p)}(G,H)}[K:H]^r
 \leq n^r
\end{equation}
for every integer $r\geq0$.
\end{theorem}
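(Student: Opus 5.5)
The plan is a double-counting (packing) argument on $(G/H)^r$. Normalize counting measure so that $(G/H)^r$ has total mass $n^r$. For each $K\in\cB_r^{(p)}(G,H)$ I will construct a weight function $w_K:(G/H)^r\to[0,1]$ satisfying two properties:
\begin{enumerate}
\item[(a)] \emph{packing:} $\sum_{K}w_K(t)\leq1$ for every tuple $t$;
\item[(b)] \emph{mass:} $\sum_{t}w_K(t)\geq\gamma_{p,r}[K:H]^r$ for every $K$.
\end{enumerate}
Summing (b) over $K\in\cB_r^{(p)}(G,H)$ and exchanging the order of summation using (a) gives \eqref{eq:moment} at once. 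The case $r=0$ is trivial, since $\cB_0^{(p)}$ consists of the $K$ with $[K:H]<p$. Because $p$ is the least prime divisor of $n$, and hence of every $[K:H]$ greater than $1$, this forces $\cB_0^{(p)}=\{H\}$. So we may take $r\geq1$.

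The naive first attempt is $w_K(t)=1$ if $t=(x_1H,\dots,x_rH)$ lies in $(K/H)^r$ and $\gen{H,x_1,\dots,x_r}=K$, and $w_K(t)=0$ otherwise. Packing (a) is then automatic, since a tuple determines the subgroup it generates. The mass condition (b) would say that $r$ uniform cosets of $K/H$ generate $K$ over $H$ with probability at least $\gamma_{p,r}$.

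The mechanism I would use for that is the escape chain $K_0=H$, $K_i=\gen{K_{i-1},x_i}$. Every index $[K_i:K_{i-1}]>1$ divides $[K:H]$, so it is at least $p$. Since $[K:H]<p^{r+1}$, at most $r$ strict escapes can occur before the chain reaches $K$. Hence $t$ generates $K$ as soon as at each step either $K_{i-1}=K$ already or $x_i\notin K_{i-1}$. The conditional failure probability at step $i$ is $[K_{i-1}:H]/[K:H]$.

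The heuristic target is that the $j$-th step from the end fails with probability at most $p^{-j}$. Multiplying these bounds gives $\prod_{j=1}^r(1-p^{-j})=\gamma_{p,r}$, which is exactly the elementary abelian count: the number of bases of $\F_p^r$ divided by $p^{r^2}$.

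The main obstacle is that this step-by-step estimate is false in general. After $i-1$ steps the current subgroup $K_{i-1}$ need not have index at least $p^{r-i+1}$ in $K$. The chain may jump, and $K_{i-1}$ may be a large maximal subgroup, so the unweighted generation probability is not controlled by $\gamma_{p,r}$ termwise. This is where the escape tree comes in.

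I would organize the process as a rooted tree whose nodes are pairs (current subgroup $J$, number of steps used). At a node with current subgroup $J<K$, a step that stays in $J$ is a wasted step, and a step outside $J$ moves to a child. Instead of insisting on generation within $r$ steps, I would assign $w_K(t)$ multiplicatively along the path of $t$. Whenever a jump of index $p^{e}$ with $e\geq1$ occurs, the jump consumes $e$ units of the budget $r$ rather than one. The weights are chosen so that the surviving mass at a node with $s$ remaining budget units is at least $\gamma_{p,s}$ times the uniform mass.

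The inductive inequality to verify is that for $J<K$ with $[K:J]=m$, where $p^s\leq m<p^{s+1}$,
\[
 \tfrac1m\,\gamma_{p,s}\;+\;\sum_{J<J'\leq K}\Pr(\text{escape lands in }J')\,\gamma_{p,\,s-\log_p[J':J]}\;\geq\;\gamma_{p,s}.
\]
This is a recursion of Gaussian-binomial type. I expect to prove it from the identity $\gamma_{p,s}=\gamma_{p,s-1}(1-p^{-s})$ together with monotonicity of $\gamma_{p,\cdot}$.

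Packing (a) survives because the weight of $t$ is split along a single path of the tree, which determines a single terminal subgroup. Proving this recursion, and in particular handling non-integral logarithms of jump indices via the floor built into the definition of $\cB_r^{(p)}$, is the step I expect to be hardest.
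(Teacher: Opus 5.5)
There is a genuine gap: the mass bound (b) is never established. You abandon the indicator weights $w_K(t)\in\{0,1\}$ because of an obstacle that is not really there. The weighted scheme you substitute is left unproved and, as formulated, cannot do the job. The escape probabilities out of $J$ sum to $1-1/[K:J]$, and $\gamma_{p,\cdot}$ is nonincreasing. So your displayed inequality follows at once from $\gamma_{p,s-\log_p[J':J]}\geq\gamma_{p,s}$, under any monotone reading of the non-integral subscripts. It is trivial because a wasted step is credited with the same $\gamma_{p,s}$ even though it uses up one of the $r$ coordinates. The invariant therefore describes eventual generation by an unbounded sequence of draws and says nothing about $r$-tuples. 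Moreover, once $w_K$ may be positive on tuples $t$ with $\gen{H,t}<K$, packing (a) is no longer automatic. Such a $t$ may also carry weight from $w_{K'}$ with $K'=\gen{H,t}$ in the same band.

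The missing idea is to count ``from the end'' along each path, measured from where that path reaches $K$, not from position $r$. Fix a path of the escape chain and let $d\leq r$ be the step at which it first reaches $K$. With $t_i=[K:K_i]$ one has $t_d=1$ and $t_i\geq p\,t_{i+1}$ by Lemma~\ref{lem:index-growth}. Hence $t_{d-j}\geq p^j$ however large the early jumps are. A jump into a large maximal subgroup only makes $d$ smaller, which helps because $\gamma_{p,d}\geq\gamma_{p,r}$. So along every path the product of the escape fractions $1-t_i^{-1}$ is at least $\gamma_{p,r}$.

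This bound is path-dependent, and the paper turns it into a count with Lemma~\ref{lem:tree}. Put $m=[K:H]$ and build the tree whose children at a node with $K_i<K$ are the $m-[K_i:H]$ escaping cosets, and whose children at a node with $K_i=K$ are all $m$ cosets. Its number of leaves is at least the minimum over root-to-leaf paths of the product of branching degrees, hence at least $\gamma_{p,r}m^r$. Every leaf generates $K$: $r$ strict steps give $[K_r:H]\geq p^r$, while $K_r<K$ would force $[K_r:H]\leq m/p<p^r$. That is (b) for the indicator weights, with packing automatic.

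Equivalently, in your recursive language, the claim is that at least $\gamma_{p,s}m^k$ tuples of length $k\geq s$ complete $J$ to $K$. Drop the wasted-step term and let every step consume one coordinate. Index each child by the band $s'$ of $[K:J']$, i.e.\ $p^{s'}\leq[K:J']<p^{s'+1}$, which satisfies $s'\leq s-1$. Then $(1-[K:J]^{-1})\gamma_{p,s-1}\geq(1-p^{-s})\gamma_{p,s-1}=\gamma_{p,s}$ closes the induction, with no non-integral logarithms.
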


The proof of Theorem~\ref{thm:moment} uses only Lagrange's theorem and a
counting lemma for rooted trees.
Regard the interval as a closure system on the coset set $G/H$: a tuple of
cosets closes to the subgroup it generates together with $H$.  Every strict
closure enlargement has multiplicative growth at least $p$.  For each target
subgroup $K$ in the $r$th band, an escape tree produces at least
$\gamma_{p,r}[K:H]^r$ tuples closing to $K$.  The sets of tuples belonging to
distinct targets are disjoint inside $(G/H)^r$, which gives
\eqref{eq:moment}.  Elementary abelian $p$-groups are extremal because every
strict step can have the smallest possible growth.

Pak~\cite[Theorem~1.1.7(i)]{Pak} compares random generation in a finite
group with generation in an elementary abelian $2$-group of rank
$\lceil\log_2|G|\rceil$.  Here we need a relative estimate for a possibly
nonnormal subgroup, at tuple length $\lfloor\log_p[K:H]\rfloor$.
This is the length required by the packing argument.

\section{Relative generation by escaping cosets}

We begin with two elementary observations.  Throughout this section, $H\leq
K\leq G$ are finite and cosets are written as $xH$.  We use $K/H$ only as a
set; no normality assumption is made.

\begin{lemma}\label{lem:index-growth}
Let $n=[G:H]>1$ and let $p$ be its least prime divisor.  If
$H\leq A<B\leq G$, then $[B:A]\geq p$.
\end{lemma}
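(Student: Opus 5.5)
The plan is to use Lagrange's theorem twice. Since $H\leq A<B\leq G$ is a chain of finite groups, the index is multiplicative:
\[
 [G:H]=[G:B]\,[B:A]\,[A:H].
\]
All three factors are positive integers. Hence $[B:A]$ divides $n=[G:H]$.

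Next, $A$ is a proper subgroup of $B$, so $|B|>|A|$ and therefore $[B:A]>1$. The integer $[B:A]$ thus has at least one prime divisor $q$. Because $[B:A]$ divides $n$, the prime $q$ also divides $n$. By the minimality of $p$ among the prime divisors of $n$, we get $q\geq p$. Consequently $[B:A]\geq q\geq p$.

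There is no real obstacle here. The only point needing care is that $A$ and $B$ need not be normal. Lagrange's theorem, in the form $|B|=[B:A]\,|A|$, still applies, and the multiplicativity of indices in a tower of finite groups holds without any normality assumption. In the paper's language, this says that every strict enlargement of an intermediate subgroup has growth factor at least $p$, which is the property the escape-tree argument relies on.
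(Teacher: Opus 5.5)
Your proof is correct and matches the paper's argument: the factorization $n=[G:B][B:A][A:H]$ shows that $[B:A]$ divides $n$, and since $[B:A]>1$ it is at least the least prime divisor of $n$. Choosing an explicit prime divisor $q$ of $[B:A]$ just spells out a step the paper leaves implicit.
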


\begin{proof}
The factorization
\[
 n=[G:B][B:A][A:H]
\]
shows that $[B:A]$ divides $n$.  Since it is larger than one, it is at least
the least prime divisor of $n$.
\end{proof}

\begin{lemma}\label{lem:tree}
Let $r\geq0$ be an integer and let $\mathcal T$ be a finite rooted tree
all of whose leaves have depth $r$.
For a vertex $v$ of depth less than $r$, let $b(v)$ be its number of children.
Then
\[
 |\operatorname{Leaf}(\mathcal T)|
 \geq
 \min_{P}\prod_{v\in P}b(v),
\]
where $P$ ranges over root-to-leaf paths and the product ranges over the
nonleaf vertices of $P$.
\end{lemma}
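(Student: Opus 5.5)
The plan is induction on $r$ (equivalently, on the height of the tree), with the induction carried out through the subtrees hanging from the root's children. For a vertex $v$, write $m(v)$ for the minimum, over paths from $v$ down to a leaf, of the product of $b(w)$ over the nonleaf vertices $w$ on that path. The claim to prove at every vertex $v$ is then
\[
 |\operatorname{Leaf}(\mathcal T_v)| \geq m(v),
\]
where $\mathcal T_v$ is the subtree rooted at $v$. Applying this at the root gives the lemma.

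\emph{Base case $r=0$.} The tree is a single vertex, which is itself a leaf. The only path has no nonleaf vertices, so the product is empty and equals $1$. There is exactly one leaf, so the inequality holds.

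\emph{Inductive step, $r\geq1$.} Since all leaves have depth $r\geq1$, the root is not a leaf, and every child $u$ of the root roots a subtree $\mathcal T_u$. In $\mathcal T_u$ all leaves have depth $r-1$, and for each nonleaf vertex the number of children is the same as in $\mathcal T$. The leaves of $\mathcal T$ are the disjoint union of the leaves of the subtrees $\mathcal T_u$, so
\[
 |\operatorname{Leaf}(\mathcal T)|=\sum_u|\operatorname{Leaf}(\mathcal T_u)|\geq \sum_u m(u)\geq b(\mathrm{root})\min_u m(u).
\]
Every root-to-leaf path has the form (root, then a path from some child $u$), so
\[
 \min_P\prod_{v\in P}b(v)=b(\mathrm{root})\min_u m(u).
\]
This closes the induction.

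I do not expect a genuine obstacle here. The only points needing care are the convention that the empty product equals $1$ at depth $0$, and the observation that a nonleaf vertex has $b(v)\geq1$. The latter holds because every leaf has depth exactly $r$, so a vertex of depth less than $r$ cannot be a leaf. This guarantees that the sum over children is nonempty, which is what allows bounding it below by $b(\mathrm{root})$ times the minimum.
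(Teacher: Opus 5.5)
Your proof is correct and follows the paper's argument essentially verbatim: induction on $r$, splitting the leaf count over the subtrees rooted at the root's children, bounding each by the inductive minimum, and using $\sum_i q_i\geq b\min_i q_i$. Your remarks on the empty product at $r=0$ and on $b(\mathrm{root})\geq1$ are exactly the small details the paper leaves implicit.
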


\begin{proof}
We argue by induction on $r$.  The assertion is clear for $r=0$.  Suppose the
root has $b$ children and the corresponding subtrees have leaf counts
$N_1,\ldots,N_b$.  By induction, $N_i\geq q_i$, where $q_i$ is the minimum
degree product along a path within the $i$th subtree.  Hence
\[
 |\operatorname{Leaf}(\mathcal T)|
 =\sum_{i=1}^{b}N_i
 \geq\sum_{i=1}^{b}q_i
 \geq b\min_iq_i,
\]
which is precisely the required root-to-leaf minimum.
\end{proof}

For an $r$-tuple $\mathbf x=(x_1H,\ldots,x_rH)$ of cosets in $K/H$, put
\[
 \gen{H,\mathbf x}=\gen{H,x_1,\ldots,x_r}.
\]
This does not depend on the chosen representatives: replacing $x_i$ by
$x_ih_i$, with $h_i\in H$, leaves the generated subgroup unchanged.

\begin{proposition}\label{prop:escape}
Let $n=[G:H]>1$, let $p$ be its least prime divisor, let $r\geq0$ be an
integer, and suppose
\[
 p^r\leq m:=[K:H]<p^{r+1}.
\]
Then the number of tuples $\mathbf x\in(K/H)^r$ satisfying
$\gen{H,\mathbf x}=K$ is at least
\[
 \gamma_{p,r}m^r.
\]
\end{proposition}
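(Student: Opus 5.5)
The plan is to build an explicit \emph{escape tree} whose leaves are distinct tuples in $(K/H)^r$ generating $K$ together with $H$, and then apply Lemma~\ref{lem:tree}. The vertices of depth $i$ are certain prefixes $(x_1H,\ldots,x_iH)$, and to each we attach the subgroup $A_i=\gen{H,x_1,\ldots,x_i}$, with $A_0=H$. Write $a_i=[A_i:H]$. The children of a vertex are defined as follows.
\begin{itemize}
\item If $A_i\neq K$, the children are the extensions by a coset $x_{i+1}H\in K/H$ not contained in $A_i$. There are exactly $m-a_i\geq1$ of them.
\item If $A_i=K$, the children are the extensions by an arbitrary coset of $K/H$. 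There are $m$ of them.
\end{itemize}
We stop at depth $r$. Every vertex of depth $<r$ has at least one child, so all leaves have depth $r$. Distinct leaves are distinct tuples. By construction, along any path the subgroups $A_i$ increase strictly until they equal $K$ and then stay equal to $K$.

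The first key step is to show that every leaf generates $K$, i.e.\ $A_r=K$. Suppose instead that $A_r\neq K$. Then $H=A_0<A_1<\cdots<A_r<K$ is a strict chain. By Lemma~\ref{lem:index-growth}, each step has index at least $p$, so $a_r\geq p^r$. The same lemma also gives $[K:A_r]\geq p$, hence $a_r\leq m/p<p^r$, a contradiction. So every leaf is a tuple counted in the proposition. It also follows that each path has a well-defined first time $t\leq r$ with $A_t=K$.

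The second step is the degree bound along a fixed path with hitting time $t$. For $j<t$ we have the strict chain $A_j<A_{j+1}<\cdots<A_t=K$ of length $t-j$. Applying Lemma~\ref{lem:index-growth} to each step gives $a_j\leq m/p^{t-j}$. Hence the branching at depth $j$ satisfies
\[
b=m-a_j\geq m\bigl(1-p^{-(t-j)}\bigr).
\]
For $t\leq j<r$ the branching is exactly $m$. Multiplying along the path gives
\[
\prod_{v\in P}b(v)\geq m^r\prod_{k=1}^{t}(1-p^{-k})=m^r\gamma_{p,t}\geq m^r\gamma_{p,r},
\]
where the last inequality uses $t\leq r$ and that each factor is $<1$. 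Lemma~\ref{lem:tree} then bounds the number of leaves, and hence the number of generating tuples, from below by $\gamma_{p,r}m^r$.

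The main obstacle is the second step. A crude bound $a_j\leq m/p$ at every step is far too weak to produce the product $\gamma_{p,r}$. The point is to measure each index $a_j$ \emph{backwards} from the hitting time $t$: using the remaining strict steps to $K$ gives the geometric factors $1-p^{-(t-j)}$ needed for the bound.
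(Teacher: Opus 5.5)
Your proposal is correct and follows essentially the same route as the paper. It uses the same escape tree, the same contradiction via Lemma~\ref{lem:index-growth} to show every leaf generates $K$, and the same backward indexing from the first time $K$ is reached, giving $[K:A_j]\geq p^{t-j}$ and hence a path factor $\gamma_{p,t}\geq\gamma_{p,r}$ before Lemma~\ref{lem:tree} is applied.
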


\begin{proof}
We construct a rooted tree of depth $r$.  The vertices at depth $i$ are
certain prefixes $(x_1H,\ldots,x_iH)$, with associated subgroup
\[
 K_i=\gen{H,x_1,\ldots,x_i};
 \qquad K_0=H.
\]
If $K_i<K$, attach one child for each coset $xH\in K/H$ not contained in
$K_i$, and associate to that child the subgroup $\gen{K_i,x}$.  If $K_i=K$,
attach one child for every coset in $K/H$, keeping the associated subgroup
equal to $K$.

The construction is independent of representatives.  Indeed, since
$H\leq K_i$, the coset $xH$ is either contained in $K_i$ or disjoint from it,
and $\gen{K_i,xh}=\gen{K_i,x}$ for every $h\in H$.  At a proper node with
associated subgroup $M<K$, exactly $[M:H]$ of the $m$ cosets lie in $M$.
Thus its branching degree is
\begin{equation}\label{eq:branching}
 m-[M:H]
 =m\left(1-\frac1{[K:M]}\right).
\end{equation}
A node associated with $K$ has branching degree $m$.

Every leaf generates $K$.  Otherwise all $r$ moves along its path would be
strict.  Lemma~\ref{lem:index-growth} would then give
\[
 [K_r:H]\geq p^r.
\]
On the other hand, $K_r<K$ would imply
\[
 [K_r:H]\leq m/p<p^r,
\]
a contradiction.

Consider a path that first reaches $K$ after $d\leq r$ strict moves.  For
$0\leq i\leq d$, put $t_i=[K:K_i]$, so that $t_d=1$.  By
Lemma~\ref{lem:index-growth},
\[
 t_i=[K_{i+1}:K_i]t_{i+1}\geq pt_{i+1}.
\]
Consequently $t_{d-j}\geq p^j$ for $1\leq j\leq d$.  Dividing the product of
the branching degrees on the path by $m^r$ and using
\eqref{eq:branching}, we obtain
\[
 \prod_{i=0}^{d-1}(1-t_i^{-1})
 \geq\prod_{j=1}^{d}(1-p^{-j})
 =\gamma_{p,d}
 \geq\gamma_{p,r}.
\]
The remaining $r-d$ levels contribute the normalized factor one.  Every
root-to-leaf degree product is therefore at least $\gamma_{p,r}m^r$.
Lemma~\ref{lem:tree} gives at least that many leaves, and each leaf is a
distinct generating tuple.
\end{proof}

\begin{remark}\label{rem:equality-escape}
Write $\F_p$ for the finite field with $p$ elements.
The constant in Proposition~\ref{prop:escape} is best possible.  For
$H=1$ and $K=\Cyc{p}^{\,r}$, the escaping tuples are precisely the ordered bases of
$\F_p^r$, and their number is
\[
 (p^r-1)(p^r-p)\cdots(p^r-p^{r-1})
 =\gamma_{p,r}|K|^r.
\]
\end{remark}

\section{Packing the relative generating tuples}

We now prove the bandwise estimate and derive the closed formulas in
Theorem~\ref{thm:main}.

\begin{proof}[Proof of Theorem~\ref{thm:moment}]
Fix an integer $r\geq0$ and, for every $K\in\cB_r^{(p)}(G,H)$, let $E_r(K)$ be the set of all
$r$-tuples of cosets in $(K/H)^r$ which generate $K$ together with $H$.
Proposition~\ref{prop:escape} gives
\begin{equation}\label{eq:E-size}
 |E_r(K)|\geq\gamma_{p,r}[K:H]^r.
\end{equation}
Each $E_r(K)$ is a subset of $(G/H)^r$.  Moreover, these subsets are pairwise
disjoint: a tuple $(x_1H,\ldots,x_rH)$ determines the subgroup
$\gen{H,x_1,\ldots,x_r}$ uniquely.  Since $|(G/H)^r|=n^r$, summing
\eqref{eq:E-size} gives
\[
 \gamma_{p,r}\sum_{K\in\cB_r^{(p)}(G,H)}[K:H]^r
 \leq\sum_{K\in\cB_r^{(p)}(G,H)}|E_r(K)|
 \leq n^r.\qedhere
\]
\end{proof}

The moment inequality immediately yields both bandwise and individual-layer
estimates.

\begin{corollary}\label{cor:band}
For every integer $r\geq0$,
\begin{equation}\label{eq:band-card}
 |\cB_r^{(p)}(G,H)|
 \leq\gamma_{p,r}^{-1}p^{r(\log_pn-r)}.
\end{equation}
More precisely, if an integer $m$ satisfies $p^r\leq m<p^{r+1}$, then
\begin{equation}\label{eq:layer-card}
 |\{K\in\Sub(G,H):[K:H]=m\}|
 \leq\gamma_{p,r}^{-1}(n/m)^r.
\end{equation}
\end{corollary}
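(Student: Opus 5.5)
Both estimates come straight from the moment inequality \eqref{eq:moment} of Theorem~\ref{thm:moment}: we bound each term $[K:H]^r$ from below using the band condition and then divide.

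\emph{The layer bound \eqref{eq:layer-card}.} Fix an integer $m$ with $p^r\leq m<p^{r+1}$. Every $K\in\Sub(G,H)$ with $[K:H]=m$ lies in $\cB_r^{(p)}(G,H)$, and all terms in \eqref{eq:moment} are nonnegative. Restricting the sum to these subgroups therefore gives
\[
 \gamma_{p,r}\,m^r\,|\{K\in\Sub(G,H):[K:H]=m\}|\leq n^r.
\]
Since $\gamma_{p,r}>0$, dividing by $\gamma_{p,r}m^r$ yields $\gamma_{p,r}^{-1}(n/m)^r$. For $r=0$ this reads $|\{K:[K:H]=m\}|\leq 1$ for $1\leq m<p$. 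This is consistent, because such $m$ must equal $1$ by Lemma~\ref{lem:index-growth}.

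\emph{The band bound \eqref{eq:band-card}.} Every $K\in\cB_r^{(p)}(G,H)$ satisfies $[K:H]\geq p^r$, so $[K:H]^r\geq p^{r^2}$. Inserting this into \eqref{eq:moment} gives
\[
 \gamma_{p,r}\,p^{r^2}\,|\cB_r^{(p)}(G,H)|\leq n^r=p^{r\log_p n}.
\]
Hence $|\cB_r^{(p)}(G,H)|\leq\gamma_{p,r}^{-1}p^{r\log_p n-r^2}=\gamma_{p,r}^{-1}p^{r(\log_p n-r)}$.

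There is no real obstacle here. The only care needed is to note that the bound is trivially true for bands with $r>\log_p n$, since those bands are empty. It also holds at $r=0$, where the band consists only of $K=H$ (again by Lemma~\ref{lem:index-growth}) and the right-hand side equals $1$.
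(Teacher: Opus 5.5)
Your proposal is correct and matches the paper's proof: you bound each term of \eqref{eq:moment} below by $p^{r^2}$ to get the band estimate, and you discard every term except those of index $m$ to get the layer estimate. Your remarks on the cases $r=0$ and $r>\log_p n$ are accurate but not needed, since \eqref{eq:moment} already holds for every $r\geq0$.
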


\begin{proof}
Since $[K:H]^r\geq p^{r^2}$ for $K\in\cB_r^{(p)}(G,H)$,
\eqref{eq:moment} gives \eqref{eq:band-card}.
For the fixed-index assertion, put
\[
 N_m=|\{K\in\Sub(G,H):[K:H]=m\}|.
\]
These $N_m$ subgroups all lie in $\cB_r^{(p)}(G,H)$, and each contributes
$m^r$ to the sum.  Discarding the other nonnegative terms in
\eqref{eq:moment} therefore gives
\[
 \gamma_{p,r}N_m m^r
 \leq\gamma_{p,r}\sum_{K\in\cB_r^{(p)}(G,H)}[K:H]^r
 \leq n^r.
\]
Division by $\gamma_{p,r}m^r>0$ proves \eqref{eq:layer-card}.
\end{proof}

The bands with $0\leq r\leq\lfloor\log_p n\rfloor$ partition
$\Sub(G,H)$.  Notice that the zeroth band is $\{H\}$.  Summing
\eqref{eq:band-card} proves the first inequality of
\eqref{eq:finite-main}.  It remains to evaluate the resulting discrete
Gaussian sum.

\begin{lemma}\label{lem:theta}
For every real $x$ and every $p>1$,
\[
 \sum_{r\in\Z}p^{-(r-x)^2}
 \leq\sum_{z\in\Z}p^{-z^2}.
\]
Equality holds if and only if $x\in\Z$.
\end{lemma}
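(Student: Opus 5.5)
The plan is to pass to Fourier series and apply the Poisson summation formula. Set $\lambda=\ln p>0$ and $g(t)=p^{-t^2}=e^{-\lambda t^2}$. The function
\[
 f(x)=\sum_{r\in\Z}p^{-(r-x)^2}=\sum_{r\in\Z}g(x+r)
\]
is $1$-periodic, continuous and smooth, since the series and all its derivatives converge uniformly on compact sets by Gaussian decay. The Gaussian $g$ is a Schwartz function, and its Fourier transform is
\[
 \widehat g(k)=\int_{\mathbb R}e^{-\lambda t^2}e^{-2\pi i k t}\,dt
 =\sqrt{\pi/\lambda}\;e^{-\pi^2k^2/\lambda}.
\]
Poisson summation for Schwartz functions then gives, for every real $x$,
\[
 f(x)=\sqrt{\pi/\lambda}\sum_{k\in\Z}e^{-\pi^2k^2/\lambda}e^{2\pi i k x}
 =\sqrt{\pi/\lambda}\Bigl(1+2\sum_{k\geq1}e^{-\pi^2k^2/\lambda}\cos(2\pi kx)\Bigr).
\]
The Fourier series converges absolutely, and the $k$ and $-k$ terms pair into cosines.

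All the Fourier coefficients are strictly positive. Since $\cos(2\pi kx)\leq1$, comparing termwise gives $f(x)\leq f(0)=\sum_{z\in\Z}p^{-z^2}$, which is the claimed inequality.

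For the equality case, note that equality forces equality in every term, because each coefficient is strictly positive. In particular, from the $k=1$ term we get $\cos(2\pi x)=1$, that is, $x\in\Z$. Conversely, if $x\in\Z$, reindexing $r\mapsto r+x$ gives $f(x)=f(0)$ directly.

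The only step that needs care is justifying Poisson summation, namely pointwise equality of the periodization with its Fourier series. This is standard for Gaussians, as both $g$ and $\widehat g$ decay rapidly, and it is the classical theta-function identity, so I do not expect a real obstacle there.

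If one prefers to avoid Fourier analysis, an alternative route is the Jacobi triple product. With $q=p^{-1}$ and $w=q^{-2x}$ one has $f(x)=q^{x^2}\sum_r q^{r^2}w^{r}$, which factors into a product of terms of the form $(1+q^{2j-1}w)(1+q^{2j-1}w^{-1})$. Optimizing over $x$ is then less transparent, so I would use the Poisson argument.
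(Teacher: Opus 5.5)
Your proposal is correct and is essentially the paper's argument: Poisson summation writes the periodized Gaussian as a cosine series with strictly positive coefficients. Termwise comparison with $x=0$ then gives the inequality, and equality forces $\cos(2\pi x)=1$. Your equality argument is spelled out more explicitly than the paper's: you note that the nonnegative terms $e^{-\pi^2k^2/\lambda}(1-\cos 2\pi kx)$ must all vanish and then read off $k=1$.
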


\begin{proof}
Poisson summation applied to the Gaussian gives
\[
 \sum_{r\in\Z}e^{-(\log p)(r-x)^2}
 =\sqrt{\frac{\pi}{\log p}}
 \sum_{\ell\in\Z}
 e^{-\pi^2\ell^2/\log p}e^{2\pi i\ell x}.
\]
Taking real parts, every nonconstant Fourier coefficient is strictly positive.
The right-hand side is therefore maximized exactly when
$\cos(2\pi\ell x)=1$ for every $\ell$, equivalently when $x\in\Z$.
\end{proof}

\begin{proof}[Completion of the proof of Theorem~\ref{thm:main}]
Since $\gamma_{p,r}^{-1}<C_p$ for every finite $r$, completing the square gives
\begin{align*}
 U_p(n)
 &<C_p\sum_{r=0}^{\lfloor L\rfloor}p^{r(L-r)}\\
 &\leq C_p p^{L^2/4}
 \sum_{r\in\Z}p^{-(r-L/2)^2}\\
 &\leq C_p\Theta_p p^{L^2/4}
 =c(p)n^{\frac14\log_pn},
\end{align*}
where the last inequality is Lemma~\ref{lem:theta}.  The asserted sharpness
will be proved in Section~\ref{sec:sharpness}.
\end{proof}

\begin{proof}[Proof of Corollary~\ref{cor:universal}]
The preceding argument only requires a lower bound for the index of every
strict step.  Such an index is always at least two, so the proof applies with
$p=2$ for every $n>1$.  Optimality follows from the elementary abelian
$2$-groups considered in Section~\ref{sec:sharpness}.
\end{proof}

As in \cite[Corollary~1.2]{GMS}, the subgroup bound has a direct
interpretation for transitive permutation groups.

\begin{corollary}\label{cor:partitions}
Let a finite group $G$ act transitively on a finite set $X$ of size $n>1$, and let $p$ be
the least prime divisor of $n$.  The number of $G$-invariant partitions
of $X$, including the two trivial partitions, is less than
$c(p)n^{\frac14\log_p n}$.  In particular it is less than
$c(2)n^{\frac14\log_2 n}$, with the same optimal universal constant.
\end{corollary}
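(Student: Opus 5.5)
The plan is to reduce the statement to Theorem~\ref{thm:main} and Corollary~\ref{cor:universal} via the standard correspondence between block systems and overgroups of a point stabilizer. Fix $\omega\in X$ and let $H=G_\omega$ be its stabilizer. By transitivity, $[G:H]=|X|=n$.

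First, set up the map. Send a $G$-invariant partition $\mathcal P$ of $X$ to the setwise stabilizer $K=G_{\{B\}}$ of the block $B\in\mathcal P$ containing $\omega$. Any $h\in H$ fixes $\omega$ and maps $B$ to a block containing $\omega$, hence to $B$ itself. So $H\leq K$, and the map lands in $\Sub(G,H)$.

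Second, show the map is injective. Transitivity of $G$, together with $G$-invariance of $\mathcal P$, gives $B=K\omega$. Indeed, $K\omega\subseteq B$ is immediate. Conversely, for $\beta\in B$ choose $g$ with $g\omega=\beta$; then $gB$ is a block containing $\beta\in B$, so $gB=B$ and $g\in K$. Moreover $\mathcal P=\{gB:g\in G\}$, because every point of $X$ lies in some translate $gB$. Hence $\mathcal P$ is recovered from $K$.

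It is worth noting that the map is actually a bijection. Given $K\in\Sub(G,H)$, the translates $gK\omega$ partition $X$, since $gK\omega\cap g'K\omega\neq\emptyset$ forces $g^{-1}g'\in KHK=K$. However, only injectivity is needed for the upper bound. The trivial partitions correspond to $K=H$ (singletons) and $K=G$ (the whole set), so both are counted consistently on each side.

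Finally, apply the subgroup bounds. Theorem~\ref{thm:main} gives a count below $c(p)n^{\frac14\log_p n}$, and Corollary~\ref{cor:universal} gives a count below $c(2)n^{\frac14\log_2 n}$.

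For optimality of the universal constant, take the regular action of $G_a=\Cyc{2}^{\,2a}$ on itself. Here $H=1$, and every subgroup gives its coset partition. The ratio therefore tends to $c(2)$ by \eqref{eq:sharp-intro}, which the paper establishes in Section~\ref{sec:sharpness}.

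The only step needing real care is the identity $B=K\omega$ together with the disjointness of translates. Both follow from a one-line transitivity argument, so no serious obstacle is expected.
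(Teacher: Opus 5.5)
Your proposal is correct and follows the paper's proof. Both take $H=G_\omega$ and use the correspondence sending the block through $\omega$ to its setwise stabilizer, with inverse $K\mapsto\{gK\omega\}$. Both then apply Theorem~\ref{thm:main} and Corollary~\ref{cor:universal}, and use regular actions of $\Cyc{2}^{\,2a}$ for optimality; you simply write out the injectivity and disjointness checks that the paper leaves as the ``usual correspondence.''
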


\begin{proof}
Fix $x\in X$ and put $H=G_x$.  The usual correspondence sends
$H\leq K\leq G$ to the partition into translates of the block $Kx$;
its inverse sends the block containing $x$ to its setwise stabilizer.
Apply Theorem~\ref{thm:main} and Corollary~\ref{cor:universal}.
Regular actions of $\Cyc{2}^{\,2a}$ give optimality.
\end{proof}

\section{Sharpness and Gaussian binomial asymptotics}\label{sec:sharpness}

Recall that the number of $k$-dimensional subspaces of $\F_p^d$ is
\[
 \qbinom{d}{k}
 =\prod_{i=0}^{k-1}\frac{p^{d-i}-1}{p^{k-i}-1}.
\]
These standard subspace counts underlie the optimality argument of
Fusari and Spiga~\cite{FS}.  We give the short limiting calculation for
general $p$ to make the precise constant self-contained.

\begin{proposition}\label{prop:gaussian-asymptotic}
For every prime $p$,
\[
 p^{-a^2}\sum_{k=0}^{2a}\qbinom{2a}{k}
 \longrightarrow c(p)
 \qquad(a\longrightarrow\infty).
\]
\end{proposition}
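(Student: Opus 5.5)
The plan is to write each Gaussian binomial coefficient as a main term times a correction factor, and then apply dominated convergence after recentring the summation index at $k=a$. Writing $q=p^{-1}$, the standard factorization is
\[
 \qbinom{d}{k}=p^{k(d-k)}\,\frac{\prod_{i=1}^{d}(1-q^{i})}{\prod_{i=1}^{k}(1-q^{i})\prod_{i=1}^{d-k}(1-q^{i})}
 =p^{k(d-k)}\frac{\gamma_{p,d}}{\gamma_{p,k}\gamma_{p,d-k}}.
\]
It comes from pulling $p^{d-i}$ out of each numerator factor and $p^{k-i}$ out of each denominator factor. The exponent simplifies to $k(d-k)$, since $\sum_{i<k}(d-i)-\sum_{i<k}(k-i)=k(d-k)$.

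Now take $d=2a$ and $k=a+z$ with $|z|\leq a$. Then $k(d-k)=a^2-z^2$, so
\[
 p^{-a^2}\sum_{k=0}^{2a}\qbinom{2a}{k}
 =\sum_{z=-a}^{a}p^{-z^2}\,\frac{\gamma_{p,2a}}{\gamma_{p,a+z}\gamma_{p,a-z}}.
\]
The weights are bounded uniformly. Every $\gamma_{p,r}$ lies in $[C_p^{-1},1]$, because $\gamma_{p,r}$ is a partial product of the convergent infinite product $C_p^{-1}=\prod_{j\ge1}(1-p^{-j})>0$ and decreases in $r$. Hence each weight is at most $C_p^{2}$. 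The summand is therefore dominated by $C_p^2p^{-z^2}$, which is summable over $z\in\Z$.

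For the pointwise limit, fix $z$ and let $a\to\infty$. Then $a\pm z\to\infty$ and $2a\to\infty$, so each $\gamma$ tends to $C_p^{-1}$. The weight therefore tends to $C_p^{-1}/C_p^{-2}=C_p$. Dominated convergence for series (Tannery's theorem) then gives the limit
\[
 \sum_{z\in\Z}C_p\,p^{-z^2}=C_p\Theta_p=c(p).
\]

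The only point needing care is the uniform domination. Near the ends of the range, for instance $z=\pm a$, one of the $\gamma$'s is $\gamma_{p,0}=1$, so the weight does not approach $C_p$ there. This is harmless, because the domination holds uniformly in $a$ and $z$. Pointwise convergence is needed only for each fixed $z$, and every fixed $z$ eventually lies in the range $|z|\leq a$.
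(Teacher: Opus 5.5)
Your proof is correct and follows essentially the same route as the paper: you pull out $p^{a^2-z^2}$, recentre at $k=a$, bound the correction factor uniformly, and apply dominated convergence on $\Z$. The only difference is cosmetic. The paper cancels $\gamma_{p,a-j}$ against $\gamma_{p,2a}$, leaving the numerator $\prod_{u=a-j+1}^{2a}(1-p^{-u})\leq 1$, which gives the slightly sharper majorant $C_p p^{-j^2}$ in place of your $C_p^2 p^{-z^2}$; either majorant suffices.
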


\begin{proof}
Write $k=a+j$.  The Gaussian product formula gives
\begin{equation}\label{eq:central-gaussian}
 \qbinom{2a}{a+j}
 =p^{a^2-j^2}
 \frac{\displaystyle\prod_{u=a-j+1}^{2a}(1-p^{-u})}
 {\displaystyle\prod_{u=1}^{a+j}(1-p^{-u})}.
\end{equation}
For fixed $j\in\Z$, the numerator tends to one and the denominator tends to
$C_p^{-1}$.  Hence
\[
 p^{-a^2}\qbinom{2a}{a+j}\longrightarrow C_pp^{-j^2}.
\]
Define the left-hand side to be zero when $|j|>a$.  The numerator in
\eqref{eq:central-gaussian} is at most one, while its denominator is at least
$C_p^{-1}$.  Thus the terms are bounded by the summable majorant
$C_pp^{-j^2}$.  Dominated convergence on $\Z$ gives
\[
 \lim_{a\to\infty}
 p^{-a^2}\sum_{k=0}^{2a}\qbinom{2a}{k}
 =C_p\sum_{j\in\Z}p^{-j^2}=c(p).\qedhere
\]
\end{proof}

\begin{proof}[Proof of the sharpness assertions in Theorem~\ref{thm:main}]
For $H_a=1$ and $G_a=\Cyc{p}^{\,2a}$, the interval
$\Sub(G_a,H_a)$ is the subspace lattice of $\F_p^{2a}$.  Therefore
\[
 |\Sub(G_a,H_a)|=\sum_{k=0}^{2a}\qbinom{2a}{k}.
\]
Here $n_a=[G_a:H_a]=p^{2a}$ and
\[
 n_a^{\frac14\log_pn_a}=p^{a^2}.
\]
The limit in \eqref{eq:sharp-intro} follows from
Proposition~\ref{prop:gaussian-asymptotic}.  In particular, neither the
coefficient $1/4$ nor the multiplicative constant $c(p)$ can be decreased.\qedhere

\end{proof}

\begin{remark}
Odd-dimensional elementary abelian groups produce the half-shifted theta sum
$\sum_{j\in\Z}p^{-(j+1/2)^2}$, which is strictly smaller than $\Theta_p$ by
Lemma~\ref{lem:theta}.  Thus even rank is the extremal parity for the universal
constant.
\end{remark}

\begin{remark}\label{rem:primepower}
At index $p^a$, the exact maximum is the number
$\sum_{k=0}^{a}\qbinom{a}{k}$ of subspaces of $\F_p^a$.
This is a consequence of the classical Gaussian layer bound for
$p$-groups; see Shalev~\cite[Corollary~4.2]{Shalev} and
Spiga~\cite[Remark~3.1]{Spiga}.  The relative $p$-group layer estimate is recorded in Guerra, Mastrogiacomo
and Spiga~\cite[Proposition~2.1]{GMS}.  To pass to arbitrary inclusions of
index $p^a$, choose a Sylow $p$-subgroup $P$ of $G$ containing a Sylow
$p$-subgroup of $H$.  Then $G=PH$ as sets, and
$K=(K\cap P)H$ for every $H\leq K\leq G$.  Thus
$K\mapsto K\cap P$ is an injection preserving the index
$[G:K]=[P:K\cap P]$.
Equality holds when $H$ is normal and $G/H\cong\Cyc{p}^{\,a}$.
\end{remark}

\section*{Acknowledgements}
The first author was supported by the National Natural Science Foundation
of China (NSFC), Grant no.~12471031.

\section*{AI assistance and formal verification}
This manuscript was prepared with assistance from GPT-5.6 Sol and
GPT-6 Astra.  The authors take full responsibility for the final text.
Formal proofs of the main results have been checked in Lean and registered
with Palomar as \texttt{PALOMAR-2026-09-03-000005}, version~1~\cite{Palomar}.


\begin{thebibliography}{99}

\bibitem{BPS}
A.~V. Borovik, L.~Pyber and A.~Shalev,
\emph{Maximal subgroups in finite and profinite groups},
Trans. Amer. Math. Soc. \textbf{348} (1996), no.~9, 3745--3761.
\href{https://doi.org/10.1090/S0002-9947-96-01665-0}
{doi:10.1090/S0002-9947-96-01665-0}.

\bibitem{FS}
M.~Fusari and P.~Spiga,
\emph{On the maximum number of subgroups of a finite group},
J. Algebra \textbf{635} (2023), 486--526.
\href{https://doi.org/10.1016/j.jalgebra.2023.07.047}
{doi:10.1016/j.jalgebra.2023.07.047}.

\bibitem{GMS}
L.~Guerra, F.~Mastrogiacomo and P.~Spiga,
\emph{Counting subgroups of a finite group containing a prescribed subgroup},
J. Pure Appl. Algebra \textbf{229} (2025), no.~9, Article 108038.
\href{https://doi.org/10.1016/j.jpaa.2025.108038}
{doi:10.1016/j.jpaa.2025.108038}.

\bibitem{Pak}
I.~Pak,
\emph{What do we know about the product replacement algorithm?},
in: Groups and Computation III (Columbus, OH, 1999),
Ohio State Univ. Math. Res. Inst. Publ., vol.~8,
de Gruyter, Berlin, 2001, pp.~301--347.
\href{https://doi.org/10.1515/9783110872743.301}
{doi:10.1515/9783110872743.301}.

\bibitem{Shalev}
A.~Shalev,
\emph{Growth functions, $p$-adic analytic groups, and groups of finite coclass},
J. London Math. Soc. (2) \textbf{46} (1992), no.~1, 111--122.
\href{https://doi.org/10.1112/jlms/s2-46.1.111}
{doi:10.1112/jlms/s2-46.1.111}.

\bibitem{Spiga}
P.~Spiga,
\emph{An explicit upper bound on the number of subgroups of a finite group},
J. Pure Appl. Algebra \textbf{227} (2023), no.~6, Article 107312.
\href{https://doi.org/10.1016/j.jpaa.2022.107312}
{doi:10.1016/j.jpaa.2022.107312}.

\bibitem{Palomar}
Palomar Registry,
\emph{Sharp bounds for intervals in finite subgroup lattices},
record \texttt{PALOMAR-2026-09-03-000005}, version~1, September~3, 2026.
\href{https://palomar-registry.org/entry?id=PALOMAR-2026-09-03-000005&version=1}
{Immutable registration record}.

\end{thebibliography}
\end{document}